\theoremstyle{plain} 
\newtheorem{theorem}{Theorem}
\newtheorem{lemma}[theorem]{Lemma}
\newtheorem{corollary}[theorem]{Corollary}
\theoremstyle{definition}
\theoremstyle{remark}
\newcommand{\name}[1]{\textsf{#1}}
\newcommand{\sss}{\name{SUBSET-SUM}\xspace}
\newcommand{\bR}{\mathbb{R}}
\newcommand{\eps}{\varepsilon}
\newcommand{\abs}[1]{\left\lvert{#1}\right\rvert}
\newcommand{\norm}[1]{\left\lVert{#1}\right\rVert}
\newcommand{\normsq}[1]{\norm{#1}_2^2}
\DeclareMathOperator*{\argmin}{argmin}
\begin{document}

\newcommand{\email}[1]{e-mail: \texttt{#1}}
\title{On the complexity of Mumford-Shah type regularization, viewed as a 
relaxed sparsity constraint }
\author{Boris~Alexeev, and Rachel~Ward,
\thanks{B. Alexeev is with the Department of Mathematics at Princeton
  University, Fine Hall, Washington Road, Princeton, NJ 08544 USA
  \email{balexeev@math.princeton.edu}.}%
\thanks{R. Ward is with the Department of Mathematics at the Courant Institute
of Mathematical Sciences, New York University, 251 Mercer St,
New York, NY 10012 USA \email{rward@cims.nyu.edu}.}}

\maketitle

\begin{abstract}
We show that inverse problems with a \emph{truncated quadratic regularization} are
NP-hard in general to solve, or even approximate up to an additive error.  This stands in contrast to 
the case corresponding to a finite-dimensional approximation to the Mumford-Shah functional, where the operator involved is the identity and for which 
polynomial-time solutions are known. Consequently, we confirm the infeasibility of 
any natural extension of the Mumford-Shah functional to general inverse problems.  A connection 
between truncated quadratic minimization and sparsity-constrained minimization is also discussed. 

\vspace{1mm}
\noindent {\bf Keywords:} inverse problems, Mumford-Shah functional, truncated quadratic regularization, sparse recovery, NP-hard, thresholding, SUBSET-SUM

\end{abstract}

\section{Introduction}
Consider a discrete signal $x \in \mathbb{R}^N$ sampled from
a piecewise smooth signal and revealed through measurements $y = Ax +
e$, where $e \in
\mathbb{R}^m$ is observation noise and $A: \mathbb{R}^N \mapsto \mathbb{R}^m$ 
is a known linear operator identified with an $m \times N$ real matrix (representing, for
instance, a blurring or partial obscuring of the data). 
Consider the
\emph{truncated quadratic minimization problem}, 
\begin{align}
\hat{x} &= \argmin_{x \in \mathbb{R}^N} \mathcal{J}(x), \nonumber \\
\mathcal{J}(x) &= \normsq{ Ax - y } + \sum_{j=1}^{N-1} Q(x_{j+1} - x_j),
\label{eq:opt}
\end{align}
with truncated quadratic penalty term $Q(u) = \alpha \min\{u^2,
\beta\}$ parametrized by $\alpha, \beta > 0$. 
Since its
introduction in 1984 by Geman and Geman in the context of 
image restoration \cite{geman, blake_zisserman, nikolova}, this problem has been the
subject of considerable theoretical and practical interest,  finding applications 
ranging from visual analysis to crack detection in fracture mechanics \cite{rondi1, rondi2}.
The choice of regularization is motivated as follows: $Q$ desires to smooth small differences $\abs{ x_{j+1} - x_j} \leq \sqrt{\beta}$ where it acts
quadratically, but suspends smoothing over larger differences.
\\
\\
\noindent From a statistical point of view, the quadratic data-fidelity term $\normsq{ Ax - y }$
can be viewed as a log-likelihood of the data under the hypothesis that $e$ is Gaussian random noise,
while the truncated quadratic regularization term corresponds to the energy of a piecewise
Gaussian Markov random field \cite{geman, besag, jeng_woods}.
\\
\\
\noindent The truncated quadratic minimization problem is non-smooth
and highly non-convex. However, several characterizations of the
minimizers have been unveiled \cite{nikolova, fornasier_ward}.  It is
known for instance that minimizers exist and
satisfy a ``gap" property \cite{nikolova, fornasier_ward}:
the magnitude of successive differences of such
solutions are either smaller than a first threshold or larger than a
second, \emph{strictly} larger threshold. These thresholds are
independent of the observed data $y = Ax + e$ and depend only on the
regularization parameters $\alpha$ and $\beta$.  This dependence is
explicit, so that a priori information about the thresholds can be
incorporated into choice of regularization parameters.
\\
\\
\noindent  When $A$ is the $N \times N$ identity matrix, the truncated quadratic objective function can be viewed as a discretization of the \emph{Mumford-Shah functional}\footnote{In \cite{chambolle1}, the minimizers $\hat{x} = \hat{x}_{(N)}$ of the $N$-dimensional truncated quadratic minimization problem with $A = I$, parameters $(\alpha_{(N)}, \beta_{(N)}) = (N^2 \alpha, N \beta)$, and $y_{(N)} = y_{(j/N)}$ identified with discrete samples from a continuous function $y \in L^{\infty}[0,1]$, were shown to converge to the minimizer of the Mumford-Shah functional, 
\begin{eqnarray}
\label{MS}
\hat{x} &=& \argmin_{x \in SBV[0,1]} {\cal F}(x), \nonumber \\
{\cal F}(x) &=& \int_{[0,1] \setminus S_x} \Big( (x - y)^2 + \alpha \normsq{ \nabla x } \Big) dx + \alpha \beta \abs{ S_x }, \nonumber
\end{eqnarray}
over the space $SBV$ of bounded variation functions on $[0,1]$ with vanishing Cantor part.  Note that SBV functions have a well-defined \emph{discontinuity set} $S_x$ of finite cardinality $\abs{ S_x }$; see \cite{fornasier_ward} for more details.}, which motivated the variational approach for edge detection and image segmentation with its introduction in 1988.  When $A$ is the identity matrix as such,  the truncated quadratic minimization problem can be solved in polynomial-time using dynamic programming \cite{chambolle1}.
However, for general $m \times N$ matrices $A$, existing algorithms for minimizing the functional \eqref{eq:opt} guarantee convergence to \emph{local} minimizers at best \cite{fornasier_ward}.
\\
\\
\noindent  In this paper, we show that the truncated quadratic minimization problem is NP-hard in general, certifying that the present convergence guarantees are the best one could hope for.  Consequently, the Mumford-Shah functional \eqref{MS} cannot be tractably extended to general inverse problems.

\section{Truncated quadratic minimization reformulated}

It will be helpful to recast the truncated quadratic minimization problem
 in terms of the discrete differences $u_j = x_{j+1}
- x_j$, effectively decoupling the action of the regularization term $Q$.
We may express this change of variables in matrix notation as $u =
D x$, with $D: \mathbb{R}^N \rightarrow \mathbb{R}^{N-1}$ the
\emph{discrete difference} matrix,
\[
D = \left ( \begin{array}{cccccc} -1 & 1 & 0& \dots & \dots &0\\
0&-1&1&0 &\dots&0\\
\dots\\
0&0&\dots&\dots&-1&1
\end{array} \right ).
\]
The null space of $D$, which we denote in the following by $\cal{N}(D)$,
is simply the one-dimensional subspace of constant vectors in
$\mathbb{R}^N$.  The orthogonal projection of a vector $x \in
\mathbb{R}^N$ onto this subspace is the constant vector $c$ whose
entries coincide with the mean value $\frac{1}{N}\sum_{j =1}^N x_j$ of
$x$, while its projection onto the orthogonal
complement of $\cal{N}(D)$ is given by the
least squares solution $D^{\dagger} D x$, where $D^\dagger$ is the
pseudo-inverse matrix of $D$ in the Moore-Penrose sense.   
These observations yield the orthogonal decomposition
$x = D^\dagger D x + c$, or, incorporating the substitution $u = Dx$,
 the decomposition $x = D^\dagger u + c$.
\\
\\
\noindent  Minimization of $\mathcal{J}$, recast in terms of the variables
 $u$ and $c$, becomes
\begin{align}
 (\hat{c}, \hat{u}) &= \argmin_{c \in N(D),\; u \in \mathbb{R}^{N-1} } \mathcal{J}(c,u), \nonumber \\
 \mathcal{J}(c,u) &=  \normsq{ A D^\dagger u + Ac - y } + \sum_{j=1}^{N-1} Q(u_j),
\label{eq:recast}
\end{align}
where the primal minimizer $\hat{x}$ and $(c,u)$-minimizer $(\hat{c}, \hat{u})$ are interchangeable according to
$ \hat{x} = D^\dagger \hat{u} + \hat{c} $.
\\
\\
\noindent If the null space of $A$ contains the constant vectors, such as if $A = TD$ for an
 $m \times (N-1)$ matrix $T$, the minimization problem~\eqref{eq:recast} reduces to a function of $u$ only,
\[
\hat{u} = \argmin_{u \in \mathbb{R}^{N-1}}  \normsq{ A D^\dagger u - y } + \sum_{j=1}^{N-1} Q(u_j).
\]
Making the substitution $A = TD$ and using that $D D^\dagger = I$,
 we see in particular that any optimization problem of the
form $\hat{u} = \argmin_{u \in \mathbb{R}^{N-1}} \normsq{ T u - y } +
\sum_{j=1}^{N-1} Q(u_j)$ can be identified with an instance of a
truncated quadratic minimization problem~\eqref{eq:opt}. To summarize,

\begin{lemma}
Let $T: \mathbb{R}^{N-1} \mapsto \mathbb{R}^m$ be a linear operator
identified with a matrix of $\mathbb{R}^{m \times (N-1)}$.  The
minimization problem
\begin{equation}
\hat{u} = \argmin_{u \in \mathbb{R}^{N-1}} \normsq{ T u - y } + \sum_{j=1}^{N-1} Q(u_j)
\label{eq:6}
\end{equation}
corresponds to a truncated quadratic minimization problem
\begin{equation}
\hat{x} = \argmin_{x \in \mathbb{R}^N} \normsq{ TD x - y } + \sum_{j=1}^{N-1} Q(x_{j+1} - x_j)
\label{eq:7}
\end{equation}
in the sense that $\hat{x} = D^{\dagger} \hat{u}$ is a minimizer
for~\eqref{eq:7} if $\hat{u}$ minimizes~\eqref{eq:6}, while $\hat{u} =
D\hat{x}$ is a minimizer for~\eqref{eq:6} if $\hat{x}$ is a minimizer
for~\eqref{eq:7}.
 \label{lem:1}

\end{lemma}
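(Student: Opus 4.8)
The proof is a routine change of variables; the only substantive ingredient is the identity $D D^{\dagger} = I$ on $\mathbb{R}^{N-1}$, which holds because $D$ has full row rank $N-1$. It is convenient to name the objective functions: let $g(u) = \normsq{Tu - y} + \sum_{j=1}^{N-1} Q(u_j)$ be the objective of \eqref{eq:6}, and $f(x) = \normsq{TDx - y} + \sum_{j=1}^{N-1} Q(x_{j+1} - x_j)$ be the objective of \eqref{eq:7}.

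The plan is to establish two elementary identities. First, $f(x) = g(Dx)$ for every $x \in \mathbb{R}^N$: this is immediate, since $TDx = T(Dx)$ and, by definition of $D$, the $j$-th coordinate of $Dx$ equals $x_{j+1}-x_j$, so the two objectives agree term for term. Second, $g(u) = f(D^{\dagger} u)$ for every $u \in \mathbb{R}^{N-1}$: applying the first identity with $x = D^{\dagger} u$ gives $f(D^{\dagger} u) = g(D D^{\dagger} u) = g(u)$, the last equality using $D D^{\dagger} = I$.

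From these two identities the minimizer correspondence is a one-line argument in each direction. If $\hat u$ minimizes $g$, then for all $x \in \mathbb{R}^N$ we have $f(x) = g(Dx) \ge g(\hat u) = f(D^{\dagger}\hat u)$, so $\hat x = D^{\dagger}\hat u$ minimizes $f$. Conversely, if $\hat x$ minimizes $f$, then for all $u \in \mathbb{R}^{N-1}$ we have $g(u) = f(D^{\dagger} u) \ge f(\hat x) = g(D\hat x)$, so $\hat u = D\hat x$ minimizes $g$.

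I do not expect any real obstacle here; the content of the lemma is bookkeeping. The one point that deserves a sentence of comment is the asymmetry of the statement. Since $f$ depends on $x$ only through $Dx$, it is invariant under adding a constant vector to $x$, so \eqref{eq:7} has an entire affine line of minimizers; the correspondence singles out the representative $D^{\dagger}\hat u$ lying in $\mathcal{N}(D)^{\perp}$, while in the other direction the difference vector $D\hat x$ of any minimizer is determined uniquely.
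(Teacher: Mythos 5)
Your proof is correct and follows essentially the same route as the paper, which obtains the lemma from the substitution $u = Dx$ (equivalently $x = D^{\dagger}u + c$) together with the identity $DD^{\dagger} = I$; your two identities $f(x)=g(Dx)$ and $g(u)=f(D^{\dagger}u)$ just make the minimizer correspondence explicit, and your closing remark about the affine family of minimizers of \eqref{eq:7} matches the paper's observation that the constant component $c$ drops out when the operator is $TD$.
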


\section{Reduction to \sss}

Recall that the complexity class NP consists of all problems whose
solution can be verified in polynomial time given a
certificate for the answer.  For example, the problem $\sss$ is
to determine, given nonzero integers $a_1, \dotsc, a_k$ and $C$, whether
or not there exists a subset $S$ of $\{1, \dotsc, k\}$ such that
$\sum_{i\in S} a _i = C$.  This problem is in NP because given any
particular subset~$S$, we can easily check whether or not its
corresponding sum is zero.
\\
\\
\noindent  Further recall that a \emph{polynomial-time many-one reduction} from a
problem $A$ to a problem $B$ is an algorithm that transforms an instance
of~$A$ to an instance of~$B$ with the same answer in time polynomial
with respect to the number of bits used to represent the instance
of~$A$.  Intuitively, this captures the notion that $A$ is no harder
than $B$, up to polynomial factors, and accordingly one may write $A \le
B$.  Finally, a problem $B$ is called NP-hard if every problem in NP is
reducible to $B$.  (Note that $A\le B$ and $B \le C$ imply $A\le C$, so
if an NP-hard problem $B$ reduces to a problem $C$, then $C$ is NP-hard
as well.)  NP-hard problems can not be solved in polynomial time unless
P$=$NP.
\\
\\
\noindent  In order to prove our NP-hardness result, we show that the known 
NP-hard problem $\sss$ admits a 
polynomial-time reduction to an instance of the truncated quadratic minimization problem.
Moreover, we show that any algorithm that could
efficiently \emph{approximate} this minimum (to within an additive error) 
could solve $\sss$
efficiently as well; that is, the search for even an approximate
solution to a truncated quadratic minimization problem is NP-hard as well.

\begin{theorem}
  Let $a_1, \dotsc, a_k$ and $C$ be given nonzero integers.  Then
  there exists a subset $S$ of $\{1, \dotsc, k\}$ such that $\sum_{i\in
    S} a_i = C$ if and only if $\min_{x\in \bR^{2k}} f(x) \le k$, where
  \begin{align*}
  f(x) = \left( C - \sum_{i=1}^k a_i x_i
  \right)^2 & + P\cdot \sum_{i=1}^k \left( 1 - x_i - x_{i+k} \right)^2
  \\ & +
  \sum_{i=1}^{2k} \min \left( 1, \frac{x_i^2}{\eps^2} \right),
  \end{align*}
with $0
  < \eps\le \frac{1}{4(\sum_i \abs{a_i})}$ and $P\ge \frac{2k}{\eps^2}$.
  Moreover, this minimum is never strictly between $k$ and $k+\frac 14$.
  
  \label{thm:2}
\end{theorem}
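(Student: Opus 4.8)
The plan is to read $x_1,\dots,x_k$ as real relaxations of indicator variables for membership in a subset $S\subseteq\{1,\dots,k\}$, and $x_{k+1},\dots,x_{2k}$ as complementary variables coupled to them by the middle term. Under this dictionary the first summand of $f$ measures $(C-\sum_{i\in S}a_i)^2$, the middle summand, carrying the enormous weight $P$, forces $x_i+x_{i+k}\approx 1$ for every $i$, and the truncated-quadratic summand essentially counts the coordinates that are bounded away from $0$, which by complementarity is about $k$. First I would record the easy direction: if a valid $S$ exists, put $x_i=1$ for $i\in S$, $x_i=0$ for $i\in\{1,\dots,k\}\setminus S$, and $x_{i+k}=1-x_i$; then the first two summands vanish and the third equals exactly $k$ (each pair $\{i,i+k\}$ contributes $\min(1,1/\eps^2)+\min(1,0)=1$, since $\eps\le\tfrac14<1$), so $f(x)=k$ and $\min_x f(x)\le k$. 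A parallel computation, splitting into the cases $\sum_i(1-x_i-x_{i+k})^2\ge\eps^2$ and $<\eps^2$ and using $P\eps^2\ge 2k$, shows $f(x)\ge k$ for every $x$, so the minimum is legitimate.

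The substance is the converse, which I would prove in the stronger form: \emph{if $f(x)<k+\tfrac14$ then a valid $S$ exists}. Since the other summands are nonnegative, $P\sum_{i=1}^k(1-x_i-x_{i+k})^2<k+\tfrac14$, hence each $(1-x_i-x_{i+k})^2<(k+\tfrac14)/P\le\eps^2(k+\tfrac14)/(2k)<\eps^2$, so $\abs{1-x_i-x_{i+k}}<\eps$ and $x_i+x_{i+k}>1-\eps\ge\tfrac34$ (using $\eps\le\tfrac{1}{4\sum_i\abs{a_i}}\le\tfrac14$). Therefore $\max(\abs{x_i},\abs{x_{i+k}})>\tfrac38>\eps$ in each pair, so for each pair at least one coordinate contributes a full $1$ to the third summand, which is thus $\ge k$. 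Consequently the first and middle summands together contribute less than $\tfrac14$.

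Now comes the rounding step, which I expect to be the delicate part. From ``middle summand $<\tfrac14$'' I get the sharper $\sum_i(1-x_i-x_{i+k})^2<1/(4P)\le\eps^2/(8k)$, so $\abs{1-x_i-x_{i+k}}<\eps/2$. Writing the third summand as $\sum_{i=1}^k\bigl(1+\min(1,\min(x_i^2,x_{i+k}^2)/\eps^2)\bigr)=k+\sum_i m_i$ (the $1$ coming from the large coordinate of pair $i$ found above), the bound ``third summand $<k+\tfrac14$'' forces each $m_i<\tfrac14<1$, hence $\min(\abs{x_i},\abs{x_{i+k}})<\eps/2$. So in each pair one coordinate is within $\eps/2$ of $0$ and, by the sum estimate, the other is within $\eps$ of $1$; in particular every $x_j$ lies within $\eps$ of $\{0,1\}$. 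Round each $x_i$ ($1\le i\le k$) to the nearest $\tilde x_i\in\{0,1\}$, so $\abs{x_i-\tilde x_i}<\eps$. Finally ``first summand $<\tfrac14$'' gives $\abs{C-\sum_i a_i x_i}<\tfrac12$, hence $\abs{C-\sum_i a_i\tilde x_i}<\tfrac12+\eps\sum_i\abs{a_i}\le\tfrac34<1$; being an integer, this is $0$, so $S=\{i:\tilde x_i=1\}$ satisfies $\sum_{i\in S}a_i=C$.

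From these pieces the theorem follows. The easy direction gives subset $\Rightarrow\min_x f(x)\le k$; conversely $\min_x f(x)\le k$ implies some $x$ has $f(x)<k+\tfrac14$, so the stronger converse produces a valid $S$, establishing the equivalence. For the gap, if $\min_x f(x)$ lay strictly in $(k,k+\tfrac14)$ then some $x$ would have $f(x)<k+\tfrac14$, the stronger converse would yield a valid $S$, and the easy direction would force $\min_x f(x)\le k$ -- a contradiction. The main obstacle, as noted, is the rounding: one must verify that the three ``budgets'' (data fidelity, coupling, count) can be made simultaneously tight enough to pin every coordinate within $\eps$ of $\{0,1\}$, and that $\eps\le\tfrac{1}{4\sum_i\abs{a_i}}$ is exactly the slack needed so that rounding moves $\sum_i a_i x_i$ by less than $\tfrac12$, after which integrality finishes the job; a subsidiary point is the decomposition of the truncated-quadratic summand into ``$k$ plus a small excess,'' valid precisely because the coupling term forces exactly one large coordinate per pair.
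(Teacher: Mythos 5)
Your proposal is correct and follows essentially the same route as the paper's proof: construct the $0/1$ assignment for the easy direction, then use the large coupling weight $P$ to force one coordinate per pair near $1$ and one near $0$, so the truncated term costs at least $k$, and round to recover a subset with error under $\tfrac12$ thanks to $\eps\le\frac{1}{4\sum_i\abs{a_i}}$. The only cosmetic differences are that you argue the hard direction as a direct contrapositive with slightly sharper $\eps$-bounds (via the $k+\sum_i m_i$ decomposition) where the paper argues by contradiction with $2\eps$-bounds, and you add the unneeded but harmless observation that $f\ge k$ everywhere.
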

\begin{proof}
  Call a subset $S\subseteq \{1, \dotsc, k\}$ \emph{good} if $\sum_{i\in
    S} a_i = C$.  If a good subset $S$ exists, we may set
  \[
  x_i = \begin{cases} 1 & \text{if $i\in S$,} \\
    0 & \text{if $i\notin S$,}
  \end{cases}
  \qquad \text{and} \qquad
  x_{i+k} = \begin{cases} 0 & \text{if $i\in S$,} \\
    1 & \text{if $i\notin S$,}
  \end{cases}
  \]
  for $1 \le i\le k$.  Then $f(x) = k$ because the first and second
  terms vanish, and there are exactly $k$ nonzero $x_i$s.  Therefore, if
  a good subset exists, the minimum is at most $k$.
\\
\\
\noindent Suppose no good subset exists, yet there exists $x$ such that $f(x) <
  k+\frac 14$.  Consider the $k$ pairs of coordinates $x_i, x_{i+k}$ for
  $1\le i\le k$.  If both $\abs{x_i}$ and $\abs{x_{i+k}}$ were less than
  $\eps$, a single summand in the second term already exceeds $2k$, as
  $P\cdot ( 1 - x_i - x_{i+k} )^2 \ge P\cdot (1 - 2\eps)^2 \ge 2k$.
  Therefore, at least one of $\abs{x_i}$ and $\abs{x_{i+k}}$ exceeds
  $\eps$ and the third term is already at least~$k$.  If more than one
  of $\abs{x_i}$ and $\abs{x_{i+k}}$ exceeded $\eps$, then the third
  term would be at least $k+1$, so exactly one of the coordinates in
  each pair exceeds $\eps$ in absolute value.  If $\abs{x_i} \le \eps$,
  then $\abs{x_{i+k} - 1} \le 2\eps$ as otherwise $P\cdot (1 - x_i -
  x_{i+k})^2 \ge P\cdot (\eps)^2 \ge 2k$; the symmetric holds if
  $\abs{x_{i+k}} \le \eps$, so all of the $x_i$ are within $2\eps$ of
  either $0$ or $1$.
\\
\\
\noindent  Let $\overline{x}_i$ be the closer of $0$ and $1$ to $x_i$.  Then
  because no good subset exists and $C$ and the $a_i$ are integers,
  $\abs{C - \sum_{i=1}^k a_i \overline{x}_i} \ge 1$.  It follows that
  the first term $\left( C - \sum_{i=1}^k a_i x_i \right)^2 \ge (1 -
  \sum 2\eps a_i)^2 \ge (\frac12)^2 = \frac14$.  But the third term was
  already at least $k$, so this is a contradiction.  Therefore, if no
  good subset exists, we must have $\min_x f(x) \ge k+\frac 14$.
\end{proof}

\begin{corollary}
  Solving the truncated quadratic regularization problem, even to within an
  additive error, is NP-hard.
\end{corollary}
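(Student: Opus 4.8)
The plan is to recognize that the function $f$ appearing in Theorem~\ref{thm:2} is, after a trivial regrouping of its terms, exactly an instance of the minimization problem~\eqref{eq:6}; Lemma~\ref{lem:1} then upgrades it to a genuine truncated quadratic minimization problem of the form~\eqref{eq:7}, and the ``forbidden gap'' from Theorem~\ref{thm:2} --- the minimum is never strictly between $k$ and $k+\tfrac14$ --- is what obstructs efficient additive approximation. In short, one composes the reduction $\sss \le \{\text{minimize }f\}$ furnished by Theorem~\ref{thm:2} with the correspondence of Lemma~\ref{lem:1}.

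First I would rewrite the first two summands of $f$ as a single squared residual. Set $T\in\bR^{(k+1)\times 2k}$ to be the matrix whose first row is $(a_1,\dots,a_k,0,\dots,0)$ and whose $(i+1)$-st row is $\sqrt{P}\,(e_i+e_{i+k})$ for $1\le i\le k$, and set $y=(C,\sqrt P,\dots,\sqrt P)\in\bR^{k+1}$; then $\normsq{Tx-y}$ reproduces the first two terms of $f$ exactly. The last summand $\sum_{i=1}^{2k}\min\{1,x_i^2/\eps^2\}$ equals $\sum_{i=1}^{2k}Q(x_i)$ for the truncated quadratic penalty $Q(u)=\alpha\min\{u^2,\beta\}$ with $\alpha=1/\eps^2>0$ and $\beta=\eps^2>0$. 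Hence $\min_{x\in\bR^{2k}}f(x)$ is precisely the minimum in~\eqref{eq:6} with $N-1=2k$, and by Lemma~\ref{lem:1} it coincides with the minimum of the truncated quadratic problem~\eqref{eq:7} for the matrix $A=TD\in\bR^{(k+1)\times(2k+1)}$, the data $y$, and the parameters $\alpha,\beta$ above.

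Next I would verify that this is a polynomial-time many-one reduction from $\sss$. Given nonzero integers $a_1,\dots,a_k,C$, the reduction picks $\eps=1/(4\sum_i\abs{a_i})$ and $P$ equal to a perfect square with $P\ge 2k/\eps^2$ (for instance $P=\lceil\sqrt{2k}/\eps\,\rceil^2$, so that $\sqrt P$ is an integer), forms $T$ and $y$, and outputs the resulting instance of~\eqref{eq:7} together with the threshold $k$. All entries are integers or rationals with $O(\text{input size})$ bits, and the dimensions are $O(k)$, so the construction runs in polynomial time; this mundane bit-complexity bookkeeping --- in particular keeping $\sqrt P$ rational, which is the reason for rounding $P$ up to a square rather than using $2k/\eps^2$ verbatim --- is really the only point that needs care.

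Finally, by Theorem~\ref{thm:2} a good subset exists if and only if this minimum is $\le k$, and otherwise the minimum is $\ge k+\tfrac14$, with nothing strictly in between. Consequently, any algorithm returning the minimum value, or a point nearly attaining it, up to an additive error smaller than $\tfrac18$ decides which side of the gap one is on, and hence solves $\sss$; since $\sss$ is NP-hard, so is $\tfrac18$-additive approximation of the truncated quadratic minimum, and a fortiori exact minimization. To see that no fixed additive tolerance $\delta$ is tractable, note that replacing $(T,y,\alpha)$ by $(\sqrt\lambda\,T,\sqrt\lambda\,y,\lambda\alpha)$ scales $f$, and therefore the gap, by $\lambda$; choosing $\lambda$ polynomially large with $\lambda/4>2\delta$ reduces $\sss$ to $\delta$-additive approximation for every $\delta$. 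Thus the truncated quadratic regularization problem admits no polynomial-time additive approximation unless P$=$NP.
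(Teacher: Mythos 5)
Your proposal is correct and follows essentially the same route as the paper: identify $f$ via Lemma~\ref{lem:1} with a truncated quadratic minimization problem and use the gap from Theorem~\ref{thm:2} (minimum at most $k$ or at least $k+\tfrac14$, never in between) to conclude that even additive approximation decides \sss. Your treatment is somewhat more explicit than the paper's --- writing out $T$ and $y$, forcing $P$ to be a perfect square so all entries are exactly representable (the paper instead argues one may approximate the entries to polynomially many bits), and adding the rescaling remark for arbitrary fixed additive tolerance --- but these are refinements of the same reduction, not a different argument.
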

\begin{proof}
In light of Lemma \ref{lem:1}, minimization of the function $f$ is a truncated quadratic minimization problem, with $m=k+1$ and $N=2k$.  Therefore, we have reduced 
 the known NP-hard problem \sss to a truncated quadratic minimization problem. 
It remains to verify that this reduction is polynomial-time.  To see this, note that
Theorem \ref{thm:2} ensures that the minimum of $f$ is either at most $k$ or at
 least $k+\frac14$; thus, we only need to approximate each of the polynomially-many 
 entries in the matrices and vectors in $f$ to within a number of bits that is polynomial compared to the 
 number of bits needed to represent $\sum_i \abs{a_i}$.   
 \end{proof}

\section{Connection to sparse recovery}
The only properties of the quadratic regularization term $\min\{
1,\penalty0 {x_i^2}/{\eps^2} \}$
needed for Theorem \ref{thm:2} were that it be bounded between $0$ and
$1$, equal to $0$ if $x_i = 0$, and equal to $1$ if $\abs{x_i} \ge
\eps$.  Indeed, Theorem \ref{thm:2} holds for \emph{any} regularization
term satisfying these properties; for example, one could consider hard
thresholding,
    \[ \abs{ x }_0 = \begin{cases} 0 & x = 0, \\ 1 & x \ne 0, 
    \end{cases} \]
    which generates the $\ell_0$ ``counting norm" $\norm{ x}_0 = \sum_{j=1}^N \abs{ x_j }_0$.  We then reprove the known result that the $\ell_0$-regularized optimization problem, 
   \begin{equation}
\hat{u} = \argmin_{u \in \mathbb{R}^{N}} \normsq{ T u - y } + \gamma \norm{u}_0,
\nonumber
\end{equation}
is NP-hard in general.  This functional is of considerable interest in the emerging area of sparse recovery, as it is guaranteed to produce sparse solutions for sufficiently large $\gamma$ and over a certain class of matrices \footnote{See \cite{CS} for matrix constructions
   that admit polynomial-time recovery algorithms for the $\ell_0$-regularized optimization problem.  All constructions at present involve an
   element of randomness, and a complete characterization of such matrices forms the core of the area known as \emph{compressed sensing}.}.  In this light, the truncated quadratic minimization problem may be 
   interpreted as a relaxation of the $\ell_0$-regularized optimization problem, and 
   our main result as showing that even such \emph{relaxations} of the $\ell_0$-regularized functional are NP-hard.

\bibliography{complexity}

\begin{thebibliography}{10}

\bibitem{besag}
J.~E. Besag.
\newblock Digital image processing: Toward {B}ayesian image analysis.
\newblock {\em J. Appl. Statist.}, 16(3):395--407, 1989.

\bibitem{blake_zisserman}
A.~Blake and A.~Zisserman.
\newblock {\em Visual reconstruction}.
\newblock MIT Press Series in Artificial Intelligence. MIT Press, Cambridge,
  MA, 1987.

\bibitem{CS}
E.~J. Candes and M.~B. Wakin.
\newblock An introduction to compressive sampling.
\newblock {\em Signal Processing Magazine, IEEE}, 25(2):21--30, 2008.

\bibitem{chambolle1}
A.~Chambolle.
\newblock Image segmentation by variational methods: {M}umford and {S}hah
  functional and the discrete approximations.
\newblock {\em SIAM J. Appl. Math.}, 55(3):827--863, 1995.

\bibitem{fornasier_ward}
M.~Fornasier and R.~Ward.
\newblock Free discontinuity problems meet iterative thresholding.
\newblock Submitted, Foundations of Computational Mathematics.

\bibitem{geman}
S.~Geman and D.~Geman.
\newblock Stochastic relaxation, {G}ibbs distributions and the {B}ayesian
  restoration of images.
\newblock {\em IEEE Trans. Pattern Anal. Mach. Intell.}, 6(6):721--741,
  November 1984.

\bibitem{jeng_woods}
F.-C. Jeng and J.~W. Woods.
\newblock Simulated annealing in compound {G}aussian random fields.
\newblock {\em IEEE Trans. Inform. Theory}, 36(1):94--107, 1990.

\bibitem{nikolova}
M.~Nikolova.
\newblock Thresholding implied by truncated quadratic regularization.
\newblock {\em IEEE Trans. Signal Process.}, 48(12):3437--3450, 2000.

\bibitem{rondi1}
L.~Rondi.
\newblock A variational approach to the reconstruction of cracks by boundary
  measurements.
\newblock {\em J. Math. Pures Appl. (9)}, 87(3):324--342, 2007.

\bibitem{rondi2}
L.~Rondi.
\newblock Reconstruction in the inverse crack problem by variational methods.
\newblock {\em European J. Appl. Math.}, 19(6):635--660, 2008.

\end{thebibliography}

\end{document}